\renewcommand\eqref[1]{(\ref{#1})}
\numberwithin{equation}{section}
\newtheorem{theorem}{Theorem}[section]
\newtheorem{proposition}[theorem]{Proposition}
\newtheorem{corollary}[theorem]{Corollary}
\newtheorem{lemma}[theorem]{Lemma}
\newtheorem{definition}[theorem]{Definition}
\newtheorem{remark}[theorem]{Remark}
\newcommand{\dx}{{\rm{d}}}
\newcommand{\Om}{\Omega}
\newcommand{\Rn}{{\mathbb{R}^{n}}}
\newcommand{\Sn}{{\mathbb{S}^{n-1}}}
\begin{document}
\title[Berezin-Li-Yau inequality]{Berezin-Li-Yau inequality for mixed local-nonlocal Dirichlet-Laplacian}
\author[A. Kassymov]{Aidyn Kassymov}
\address{
	Aidyn Kassymov:
	\endgraf
    Institute of Mathematics and Mathematical Modeling
    \endgraf
    28 Shevchenko str.,
    050010 Almaty,
    Kazakhstan
    \endgraf
	{\it E-mail address} {\rm kassymov@math.kz}
		}
   
\author[B.T. Torebek]{Berikbol T. Torebek}
\address{
  Berikbol Torebek:
  \endgraf
  Institute of Mathematics and Mathematical Modeling
  \endgraf
  28 Shevchenko str., 050010 Almaty,  Kazakhstan
  \endgraf
{\it E-mail address}  {\rm torebek@math.kz}}

\thanks{
The authors were supported in parts by the No.AP23484106 from the Ministry of  Science and Higher Education of the Republic of Kazakhstan.
}

     \keywords{Berezin-Li-Yau inequality, eigenvalue problem, mixed local--nonlocal operator, fractional Laplacian, Dirichlet problem.}
 \subjclass{47A75, 35P15.}
\begin{abstract}
In this paper, we consider an eigenvalue problem for mixed local-nonlocal Laplacian $$\mathcal{L}^{a,b}_{\Om}:=-a\Delta+b(-\Delta)^s,\,a>0,\,b\in\mathbb{R},\,s\in (0,1),$$ with Dirichlet boundary conditions. First, the case $a>0$ and $b>0$ is considered and the Berezin-Li-Yau inequality (lower bounds of the sum of eigenvalues) is established. This inequality is characterised as the maximum of the classical and fractional versions of the Berezin-Li-Yau inequality, and, in particular, yields both the classical and fractional forms of the Berezin-Li-Yau inequality. Next, we consider the case $a>0$ and $-\frac{a}{C_E}<b<0$, where $C_E\geq 1$ is the constant of the continuous embedding $H_{0}^{1}(\Om)\subset H_{0}^{s}(\Om)$. In this setting, we also derive the Berezin-Li-Yau inequality, which explicitly depends on the constant $C_E$.
\end{abstract}
\maketitle

\section{Introduction and main result}

Let $\Om$ be a bounded domain in $\Rn$ with volume $|\Om|$, and let $\{\lambda_{k}(\Om)\}_{k=1}^{+\infty}$
be the system of eigenvalues of the Dirichlet-Laplacian $-\Delta_{\Om}$. A fundamental result established by Weyl (in \cite{W12}) provides an
asymptotic expression for these eigenvalues as
\begin{equation*}
    \lambda_{k}(\Om)\sim \frac{4\pi^{2}}{(\omega_{n}|\Om|)^{\frac{2}{n}}}k^{\frac{2}{n}},\,\,\,\text{as}\,\,\,k\rightarrow +\infty.
\end{equation*}
This formula describes the growth rate of the eigenvalues in terms of the volume of the domain and the dimension 
$n$, where $\omega_{n}=\frac{\pi^{\frac{n}{2}}}{\Gamma\left(1+\frac{n}{2}\right)}$ represents the volume of the unit ball in 
$\mathbb{R}^{n}$. Weyl's theorem applies in the asymptotic behavior as 
$k\rightarrow +\infty$, meaning it describes the behavior of eigenvalues for large 
$k$. However, it does not offer precise details about the eigenvalues for small values of 
$k$. 

In 1961, P\'{o}lya (see \cite{Pol61}) presented that even for small values of 
$k$, the term  $\frac{4\pi k }{|\Om|}$ serves as a lower bound for the eigenvalue $\lambda_{k}(\Om)$ of the Dirichlet Laplacian over tiling domains\footnote{The bounded domain $
\Om$ is called a tiling domain
in $\mathbb{R}^{n}$ if its congruent non-overlapping translations cover $\mathbb{R}^{n}$ without gaps.} on $\mathbb{R}^{2}$. Specifically, P\'{o}lya established the following estimate:
$$\lambda_{k}(\Om)\geq \frac{4\pi k }{|\Om|},\,\,\,\,\text{for}\,\,\,k=1,2,3,\ldots,$$
where $\Om$ is a tiling domain in $\mathbb{R}^{2}$.
Also, P\'{o}lya proposed the well-known conjecture: 

\textbf{P\'{o}lya's Conjecture:} {\it if $\Om$ is an arbitrary bounded domain in $\mathbb{R}^{n}$, then $\lambda_{k}(\Om)$ satisfies} 
\begin{equation}\label{Polconf}
    \lambda_{k}(\Om)\geq 4\pi \left(\frac{\Gamma\left(1+\frac{n}{2}\right)k}{|\Om|}\right)^{\frac{2}{n}},\,\,\,\,\text{for}\,\,\,k=1,2,3,\ldots\,\,.
\end{equation}

In 1972, Berezin (see \cite{Ber72}) established the following upper bound for the Riesz means $\lambda_{k}(\Om)$ as
\begin{equation}\label{Berres}
    \sum(\Lambda-\lambda_{k}(\Om))^{\sigma}_{+}\leq \frac{\Gamma(\sigma+1)}{(4\pi)^{\frac{n}{2}}\Gamma(\sigma+\frac{n}{2}+1)}|\Om|\Lambda^{\frac{n}{2}+\sigma},
\end{equation}
where $a_{+}=\max\{0,a\}$ and $\sigma \geq 1$. 

Later, Li and Yau in \cite{LY83}, derived a lower bound for the sums of the Dirichlet-Laplacian eigenvalues, as
\begin{equation}\label{LYres}
    \sum_{i=1}^{k}\lambda_{i}(\Om)\geq \frac{n}{n+2}\left(\frac{(2\pi)^{n}}{\omega_{n}|\Om|}\right)^{\frac{2}{n}}k^{1+\frac{2}{k}}.
\end{equation}
The constants in \eqref{Berres} and \eqref{LYres} are optimal, as indicated by the Weyl asymptotic formula for the eigenvalues. Moreover, we observe that \eqref{LYres} can be derived from \eqref{Berres} by applying the Legendre transform in the case where
$\sigma=1$. Hence, instead of referring to it solely as the Li-Yau inequality, we find it more appropriate to call it the Berezin-Li-Yau inequality. The Berezin-Li-Yau inequality was also obtained for the fractional Laplacian with Dirichlet boundary condition in \cite{YY12} and we refer to \cite{FL20, GLW11, KW15, FLW09,CC24, CV23} and \cite{IL19} in the papers related to the Berezin-Li-Yau inequality.  

From \eqref{LYres}, one can deduce the following lower bound for the Dirichlet eigenvalues:
\begin{equation*}
\lambda_{k} (\Om)\geq \frac{n}{n+2} \left(\frac{(2\pi)^n}{\omega_n |\Omega|}\right)^{\frac{2}{n}} k^{\frac{2}{n}},\,\,\,\,k=1,2,3,\ldots,
\end{equation*}
which constitutes a partial resolution of the Pólya conjecture, featuring the multiplicative factor $\frac{n}{n+2}$. However, the full conjecture remains open.

The main aim of this paper is to investigate a lower bound for the sum of eigenvalues—known as the Berezin–Li–Yau inequality—for the following problem:
\begin{equation}\label{mainproblem}
    \mathcal{L}^{a,b}_{\Om}u(x):=\begin{cases}
        -a\Delta u(x)+b(-\Delta)^{s}u(x)=\lambda u(x),\,\,\,x\in\Om,\\{}\\
        u(x)=0,\,\,\,\,x\in \mathbb{R}^{n}\setminus\Om,
    \end{cases}
\end{equation}
where parameters $a>0$, $b> 0$ and $a>0,$ $0>b>-\frac{a}{C_{E}}$, and $(-\Delta)^{s}$ is the fractional Laplacian with $s\in(0,1)$ and 
$$(-\Delta)^{s}u(x):=\frac{c_{n,s}}{2}\int_{\mathbb{R}^{n}}\frac{2u(x)-u(x+y)-u(x-y)}{|y|^{n+2s}}\dx y,$$
where $c_{n,s}$ is the normalised constant defined in \eqref{norcon}. 

Such operators give rise to compelling mathematical inquiries, particularly due to their lack of scale invariance and the complex interplay between local and nonlocal dynamics. The subject of mixed local and nonlocal operators has been extensively investigated, and it is impossible to give a reasonably complete review of the literature here. We refer to \cite{BDVV22, BMV24} and \cite{CS16}, and for the eigenvalue problems \cite{MMV23, CEF24,  GS19, BDVV23} and \cite{BDVV21}. 

It is easy to see that, if $a>0$ and $b>0$ the operator $\mathcal{L}^{a,b}_{\Om}$  is positive.  In the case $a>0$ and $b>-\frac{a}{C_{E}}$, using the continuous embedding $H_{0}^{1}(\Om)\subset H_{0}^{s}(\Om)$ form \cite[Proposition 2.2]{DVP12} (see \eqref{hsh1}), we have  that the operator $\mathcal{L}^{a,b}_{\Om}$ is also positive. Here, $C_E\geq 1$ is the constant of the continuous embedding $H_{0}^{1}(\Om)\subset H_{0}^{s}(\Om)$. By \cite[Proposition 1]{MMV23}, if $b\leq-\frac{a}{C_{E}}$, the situation becomes significantly more subtle due to the loss of positive definiteness of operator $\mathcal{L}^{a,b}_{\Om}$. Consequently, the associated bilinear form \eqref{bilform} no longer defines an inner product or induces a norm. As a result, the corresponding variational spectrum may contain negative eigenvalues; that is, there exists $C>0$ such that 
$$-C<\lambda_{1}(\Om)<\lambda_{2}(\Om)\leq \lambda_{3}(\Om)\leq\ldots\rightarrow +\infty.$$
The main results of this paper are encapsulated in the following statements concerning the Berezin–Li–Yau inequality for \eqref{mainproblem}, under the conditions $a>0$ and $b>0$ or $0>b>-\frac{a}{C_{E}}$:
\begin{theorem}\label{thm1}
Assume that $\lambda_{k}(\Om)$ denotes the $k$-th eigenvalue of \eqref{mainproblem}, where  $a>0$ and $\Omega \subset \Rn$ is a bounded domain. Then:
\begin{itemize}
    \item[(i)] if $b>0,$ then we have
    \begin{equation}\label{LYa}
   \sum_{i=1}^{k}\lambda_{i}(\Om)\geq nk\max\left\{\frac{4a\pi }{n+2}\left(\frac{\Gamma\left(1+\frac{n}{2}\right)}{|\Omega|}\right)^{\frac{2}{n}}k^{\frac{2}{n}},\frac{(4\pi)^{s}b}{n+2s}\left(\frac{\Gamma\left(1+\frac{n}{2}\right)}{|\Omega|}\right)^{\frac{2s}{n}}k^{\frac{2s}{n}}\right\};
    \end{equation}
    \item[(ii)] if $-\frac{a}{C_{E}}<b<0$, where $C_{E}\geq1$ is defined in \eqref{hsh1}, then we have
     \begin{equation}\label{LYb}
  \sum_{i=1}^{k}\lambda_{i}(\Om)\geq nk(a+C_{E}b)\max\left\{\frac{4\pi}{n+2}\left(\frac{\Gamma\left(1+\frac{n}{2}\right)}{|\Omega|}\right)^{\frac{2}{n}}k^{\frac{2}{n}},\frac{(4\pi)^{s}}{(n+2s)C_{E}}\left(\frac{\Gamma\left(1+\frac{n}{2}\right)}{|\Omega|}\right)^{\frac{2s}{n}}k^{\frac{2s}{n}}\right\}.
    \end{equation}
\end{itemize}
\end{theorem}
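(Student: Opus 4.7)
The plan is to adapt the standard Berezin--Li--Yau strategy based on the Fourier transform and the bathtub principle, treating the symbol $a|\xi|^{2}+b|\xi|^{2s}$ of $\mathcal{L}^{a,b}_{\Omega}$ on the Fourier side. Letting $\{u_{i}\}_{i=1}^{k}$ be an $L^{2}(\Omega)$-orthonormal family of eigenfunctions for the first $k$ eigenvalues, extended by zero to $\mathbb{R}^{n}$, Plancherel's identity yields
\[
\sum_{i=1}^{k}\lambda_{i}(\Omega)=\int_{\mathbb{R}^{n}}\bigl[a|\xi|^{2}+b|\xi|^{2s}\bigr]F(\xi)\,\mathrm{d}\xi,
\]
where $F(\xi):=(2\pi)^{-n}\sum_{i=1}^{k}|\widehat{u_{i}}(\xi)|^{2}$. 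The two structural ingredients I would use are Bessel's inequality against the Fourier modes on $\Omega$, giving the pointwise bound $F(\xi)\le(2\pi)^{-n}|\Omega|$, together with $L^{2}$-orthonormality, giving $\int_{\mathbb{R}^{n}}F\,\mathrm{d}\xi=k$.

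For part (i) with $a,b>0$, both weights $|\xi|^{2}$ and $|\xi|^{2s}$ are radially nondecreasing, so by the bathtub principle the quantity $\int\bigl[a|\xi|^{2}+b|\xi|^{2s}\bigr]F\,\mathrm{d}\xi$ is minimized over the constraint set by $F^{\ast}=(2\pi)^{-n}|\Omega|\,\mathbf{1}_{B_{R}}$ with $\omega_{n}R^{n}=(2\pi)^{n}k/|\Omega|$. Using $\int_{B_{R}}|\xi|^{\alpha}\,\mathrm{d}\xi=\tfrac{n\omega_{n}}{n+\alpha}R^{n+\alpha}$, this evaluation gives
\[
\sum_{i=1}^{k}\lambda_{i}(\Omega)\ge nk\Bigl[\tfrac{aR^{2}}{n+2}+\tfrac{bR^{2s}}{n+2s}\Bigr],
\]
and substituting $R^{2}=4\pi(\Gamma(1+n/2)/|\Omega|)^{2/n}k^{2/n}$ and $R^{2s}$ similarly produces exactly the sum of the two expressions inside the maximum in \eqref{LYa}. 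Since both expressions are nonnegative, dropping one of them yields the stated $\max$-form bound.

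For part (ii), where $-a/C_{E}<b<0$, the two terms compete and the bathtub minimizer for $a|\xi|^{2}+b|\xi|^{2s}$ is no longer manifestly concentrated on a ball. My plan is to remove the competition by invoking the embedding inequality \eqref{hsh1}, which on the Fourier side reads $\int|\xi|^{2s}|\widehat{v}|^{2}\le C_{E}\int|\xi|^{2}|\widehat{v}|^{2}$ for $v\in H_{0}^{1}(\Omega)$ extended by zero. Summing over $v=u_{i}$ yields $\int|\xi|^{2s}F\le C_{E}\int|\xi|^{2}F$. Since $b<0$, multiplying by $b$ flips the inequality and gives
\[
\sum_{i=1}^{k}\lambda_{i}(\Omega)\ge(a+C_{E}b)\int_{\mathbb{R}^{n}}|\xi|^{2}F\,\mathrm{d}\xi;
\]
applying the bathtub estimate to the pure local piece (exactly as in part (i)) recovers the first term inside the $\max$ in \eqref{LYb}. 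Running the same trick in the opposite direction ($\int|\xi|^{2}F\ge C_{E}^{-1}\int|\xi|^{2s}F$) gives $\sum\lambda_{i}(\Omega)\ge (a+C_{E}b)C_{E}^{-1}\int|\xi|^{2s}F$, and the nonlocal bathtub estimate recovers the second term. Since $a+C_{E}b>0$ by hypothesis, both bounds are meaningful, and taking the larger one is \eqref{LYb}.

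The step that will require the most care is the Fourier-side identification of the quadratic form after the zero extension, since one must check that the Dirichlet fractional form on $H_{0}^{s}(\Omega)$ coincides with the full-space form $\int|\xi|^{2s}|\widehat{v}|^{2}\,\mathrm{d}\xi$ applied to the zero extension, with the precise constant $c_{n,s}$ from \eqref{norcon}; once this is settled, the rest of the argument is bookkeeping with $R$ and the formula for $\omega_{n}$.
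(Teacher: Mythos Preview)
Your proposal is correct and follows essentially the same route as the paper: both arguments pass to the Fourier side, use Bessel's inequality for the pointwise bound on $F$, Plancherel for $\int F=k$ and for the eigenvalue identity, and then the bathtub/rearrangement comparison (which the paper packages as its Lemma~\ref{lem1}); for part~(ii) both reduce to the pure $|\xi|^{2}$ or $|\xi|^{2s}$ case via the embedding \eqref{hsh1} before applying the same lemma. A minor remark: in part~(i) your direct bathtub computation in fact yields the slightly sharper bound $\sum_{i}\lambda_{i}\ge nk\bigl[\tfrac{aR^{2}}{n+2}+\tfrac{bR^{2s}}{n+2s}\bigr]$ with $R$ as you specify, which you then weaken to the stated $\max$; the paper's Lemma~\ref{lem1} is structured so that this weakening is already built in.
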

The following corollary is an immediate consequence of Theorem \ref{thm1}:
\begin{corollary}
By the monotonicity (nondecreasing property) of the map $k\rightarrow \lambda_{k}(\Om)$, the following facts hold:
    \begin{itemize}
        \item[(i)] if $a>0$ and $b>0$, then
        $$\lambda_{k}(\Om)\geq n\max\left\{\frac{4a\pi }{n+2}\left(\frac{\Gamma\left(1+\frac{n}{2}\right)}{|\Omega|}\right)^{\frac{2}{n}}k^{\frac{2}{n}},\frac{(4\pi)^{s}b}{n+2s}\left(\frac{\Gamma\left(1+\frac{n}{2}\right)}{|\Omega|}\right)^{\frac{2s}{n}}k^{\frac{2s}{n}}\right\};$$
        \item[(ii)] if $a>0$ and $-\frac{a}{C_{E}}<b<0$, then
        $$\lambda_{k}(\Om)\geq (a+C_{E}b)n\max\left\{\frac{4\pi }{n+2}\left(\frac{\Gamma\left(1+\frac{n}{2}\right)}{|\Omega|}\right)^{\frac{2}{n}}k^{\frac{2}{n}},\frac{(4\pi)^{s}}{(n+2s)C_{E}}\left(\frac{\Gamma\left(1+\frac{n}{2}\right)}{|\Omega|}\right)^{\frac{2s}{n}}k^{\frac{2s}{n}}\right\}.$$
    \end{itemize}
    
\end{corollary}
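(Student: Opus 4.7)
The plan is to reduce the proof of both cases to the classical Berezin-Li-Yau inequality \eqref{LYres} and to its fractional counterpart established in \cite{YY12}, via a quadratic-form comparison at the level of the operator. Concretely, I would start from the variational identity
\[
\sum_{i=1}^{k}\lambda_{i}(\Om)=a\sum_{i=1}^{k}\langle -\Delta u_{i},u_{i}\rangle+b\sum_{i=1}^{k}\langle (-\Delta)^{s}u_{i},u_{i}\rangle,
\]
where $\{u_{i}\}_{i=1}^{k}\subset H^{1}_{0}(\Om)$ is an $L^{2}(\Om)$-orthonormal system of eigenfunctions associated with $\lambda_{1}(\Om),\ldots,\lambda_{k}(\Om)$. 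The min-max principle, applied to $-\Delta_{\Om}$ and to the fractional Dirichlet Laplacian $(-\Delta)^{s}_{\Om}$ separately, yields
\[
\sum_{i=1}^{k}\langle -\Delta u_{i},u_{i}\rangle\geq \sum_{i=1}^{k}\lambda_{i}^{\mathrm{loc}}(\Om),\qquad \sum_{i=1}^{k}\langle (-\Delta)^{s}u_{i},u_{i}\rangle \geq \sum_{i=1}^{k}\lambda_{i}^{s}(\Om),
\]
where $\lambda_{i}^{\mathrm{loc}}(\Om)$ and $\lambda_{i}^{s}(\Om)$ denote the Dirichlet eigenvalues of $-\Delta$ and of $(-\Delta)^{s}$, respectively.

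For part (i) the coefficients $a,b$ are both positive, so keeping only the local sum on the right and applying \eqref{LYres} produces the first expression inside the $\max$ of \eqref{LYa}, after rewriting $\omega_{n}=\pi^{n/2}/\Gamma(1+\frac{n}{2})$; discarding instead the local sum and applying the fractional Berezin-Li-Yau bound of \cite{YY12} produces the second. Taking the larger of the two lower bounds yields \eqref{LYa}.

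For part (ii) the additional input is the continuous embedding $H^{1}_{0}(\Om)\subset H^{s}_{0}(\Om)$ recalled in \eqref{hsh1}, which translates into the quadratic-form inequality
\[
\langle (-\Delta)^{s}u,u\rangle \leq C_{E}\,\langle -\Delta u,u\rangle,\qquad u\in H^{1}_{0}(\Om).
\]
Since $b<0$, multiplying by $b$ reverses the inequality, and I would deduce the two form-level lower bounds
\[
\langle \mathcal{L}^{a,b}_{\Om}u,u\rangle \geq (a+C_{E}b)\langle -\Delta u,u\rangle,\qquad \langle \mathcal{L}^{a,b}_{\Om}u,u\rangle \geq \frac{a+C_{E}b}{C_{E}}\langle (-\Delta)^{s}u,u\rangle,
\]
whose right-hand coefficients are strictly positive under the hypothesis $b>-\frac{a}{C_{E}}$. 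Summing over $i=1,\ldots,k$ and applying respectively \eqref{LYres} and the fractional Berezin-Li-Yau inequality of \cite{YY12} produces the two candidates inside the $\max$ of \eqref{LYb}.

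The main obstacle I anticipate is concentrated in case (ii): the operator $\mathcal{L}^{a,b}_{\Om}$ is not a positive combination of $-\Delta$ and $(-\Delta)^{s}$, so the naive strategy of dropping the $b$-term fails. The embedding constant $C_{E}$ is precisely what allows the sign-indefinite lower-order term to be absorbed into either the local or the nonlocal energy, so the whole argument hinges on controlling the two Dirichlet seminorms against each other on $H^{1}_{0}(\Om)$. Once this comparison is in place, no new Fourier-analytic or rearrangement input is needed, and the Berezin-Li-Yau estimate for the mixed operator follows directly from its pure local and pure nonlocal counterparts.
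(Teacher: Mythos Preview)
Your argument is correct, and it reaches the sum bounds \eqref{LYa}--\eqref{LYb} (from which the corollary follows by the monotonicity $k\lambda_k(\Om)\geq\sum_{i=1}^k\lambda_i(\Om)$, which you do not write out but which is already flagged in the statement). However, your route to \eqref{LYa}--\eqref{LYb} is genuinely different from the paper's.

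The paper works entirely on the Fourier side: it sets $f(\xi)=\sum_{i=1}^{k}|\hat u_i(\xi)|^2$, bounds $f\le (2\pi)^{-n}|\Om|$ by Bessel's inequality, identifies $\int(a|\xi|^2+b|\xi|^{2s})f\,\dx\xi$ with $\sum\lambda_i(\Om)$ via Plancherel, and then feeds these two pieces of data into a bathtub-type lemma (Lemma~\ref{lem1}) to extract the bound on $k=\int f$. In case (ii) the embedding \eqref{hsh1} is used first to replace the mixed symbol by a pure $|\xi|^2$ or $|\xi|^{2s}$ weight, and then Lemma~\ref{lem1} is applied with one of $\alpha,\beta$ set to zero. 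In effect the paper re-derives the classical and fractional Berezin--Li--Yau inequalities inside Lemma~\ref{lem1}.

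You instead stay at the operator level: you split the quadratic form of $\mathcal{L}^{a,b}_\Om$ on the orthonormal eigenbasis, invoke the Ky~Fan trace inequality to pass from $\sum_i\langle -\Delta u_i,u_i\rangle$ and $\sum_i\langle(-\Delta)^s u_i,u_i\rangle$ to the eigenvalue sums of the pure Dirichlet operators, and then quote \eqref{LYres} and the fractional result of \cite{YY12} as black boxes. In case (ii) you use \eqref{hsh1} exactly as the paper does, but at the form level rather than the symbol level. Your approach is more modular---no Fourier analysis or bathtub argument is needed once the pure cases are known---while the paper's approach is self-contained and keeps the door open for sharper bounds coming from the full mixed weight in Lemma~\ref{lem1} (though the paper does not exploit this).
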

\begin{remark}
\begin{itemize} Based on the main results, the following reasoning takes place:
    \item[(a)] 
Let $a=b=1$ and $k=1$, then from the results of the above statements it is easy to obtain the following 
\begin{align*}\lambda_{1}(\Om)\geq n\max\left\{\frac{4\pi }{n+2}\left(\frac{\Gamma\left(1+\frac{n}{2}\right)}{|\Omega|}\right)^{\frac{2}{n}},\frac{(4\pi)^{s}}{n+2s}\left(\frac{\Gamma\left(1+\frac{n}{2}\right)}{|\Omega|}\right)^{\frac{2s}{n}}\right\},\end{align*} in the special case when $n=2$ it gives us
\begin{align*}\lambda_{1}(\Om)&\geq 2\max\left\{{\pi }|\Omega|^{-1},\frac{(4\pi)^{s}}{2+2s}|\Omega|^{- s}\right\}\\&\geq 2\pi\max\left\{|\Omega|^{-1},|\Omega|^{- s}\right\}\\&=2\pi \left\{\begin{array}{cc}
    |\Omega|^{-1}, & \text{if}\,\,\,\,|\Omega|\leq 1, \\
 |\Omega|^{-s}, & \text{if}\,\,\,\,|\Omega|\geq 1. 
\end{array}\right.\end{align*}
\item[(b)]  Assume that $a=1$ and $b=0,$ then $\lambda_{i}(\Om)$ with $i=1,2,\ldots,k$ coincide with Dirichlet-Laplacian eigenvalues. Hence \eqref{LYa} gives us the classical Berezin-Li-Yau inequality (see \cite{LY83})
    \begin{equation*}
        \sum_{i=1}^{k}\lambda_{i}(\Om)\geq \frac{4\pi n}{n+2}\left(\frac{\Gamma\left(1+\frac{n}{2}\right)}{|\Omega|}\right)^{\frac{2}{n}}k^{1+\frac{2}{n}}.
    \end{equation*}
If $a=0$ and $b=1,$ then $\lambda_{i}(\Om)$ with $i=1,2,\ldots,k$ coincides with fractional Dirichlet-Laplacian eigenvalues. Then \eqref{LYa} gives us the fractional Berezin-Li-Yau inequality (see \cite{YY12})
$$\sum_{i=1}^{k}\lambda_{i}(\Om)\geq \frac{(4\pi)^{s}n}{n+2s}\left(\frac{\Gamma\left(1+\frac{n}{2}\right)}{|\Omega|}\right)^{\frac{2s}{n}}k^{1+\frac{2s}{n}}.$$
\item[(c)] As can be seen, Theorem \ref{thm1} does not include case $$a>0\,\,\,\,\,\text{and}\,\,\,\,\,b\leq -\frac{a}{C_E},$$ that is, the case when some of the eigenvalues become negative. At this point, due to the lack of appropriate approaches, we leave this question open. It is worth noting here that in \cite{CV23} Chen and V\'{e}ron obtained the Berezin-Li-Yau inequality for the logarithmic Laplacian, which also has negative eigenvalues $\lambda_1(\Om)<\lambda_2(\Om)\leq \lambda_3(\Om)\leq \ldots\rightarrow+\infty.$ However, the method employed in \cite{CV23} is not applicable to the problem \eqref{mainproblem} due to its intricate structure.
\end{itemize}
\end{remark}

\section{Preliminaries}
We begin with some preliminary results from the theory of the fractional Laplacian. Here, the fractional Laplacian $(-\Delta)^{s}$ is defined by
$$(-\Delta)^{s}u(x):=\frac{c_{n,s}}{2}\int_{\mathbb{R}^{n}}\frac{2u(x)-u(x+y)-u(x-y)}{|y|^{n+2s}}\dx y,\,\,\,\,s\in (0,1),$$
where 
\begin{equation}\label{norcon}
   c_{n,s}:=\left(\int_{\mathbb{R}^{n}}\frac{1-\cos\zeta_{1}}{|\zeta|^{n+2s}} \dx \zeta\right)^{-1}. 
\end{equation}
By \cite[Proposition 3.3]{DVP12}, for $s\in(0,1]$, we have 
$$(-\Delta)^{s}u(x)=\mathcal{F}^{-1}(|\xi|^{2s}\mathcal{F}u)(x),\,\,\,\text{for\,\,all}\,\,\xi\in\mathbb{R}^{n},$$
where $\mathcal{F}$ and $\mathcal{F}^{-1}$ are the Fourier and inverse Fourier transforms, respectively. Also, it is well-known that the following limits:
$$\lim_{s\rightarrow 0^{+}}(-\Delta)^{s}u(x)=u(x),\,\,\,\lim_{s\rightarrow 1^{-}}(-\Delta)^{s}u(x)=-\Delta u(x),\,\,\,\text{for \,all}\,\,u\in C^{2}_{0}(\mathbb{R}^{n}).$$

Assume $\Om\subseteq \mathbb{R}^{n}$ is a connected and bounded open set with $C^{1}$-smooth boundary $\partial \Om$. Define the space 
\begin{equation}\label{space}
    \mathbb{X}(\Om):=\{u\in H^{1}(\mathbb{R}^{n}):u\equiv0\,\,\, \text{a.e.\,\,on}\,\,\,\mathbb{R}^{n}\setminus \Om\}.
\end{equation}
Due to the regularity assumption on $\partial \Om$, the space
$\mathbb{X}(\Om)$ can be regarded as $H^{1}_{0}(\Om)$ in the following sense:
$$u\in H^{1}_{0}(\Om)\Leftrightarrow u\cdot 1_{\Om}\in \mathbb{X}(\Om),$$
where $1_{\Om}$ is the indicator function. Then we identify an element $u\in H^{1}_{0}(\Om)$ with $\tilde{u}:=u\cdot 1_{\Om}$. Also, we note that 
$\mathbb{X}(\Om)$ is a separable and reflexive space, with 
$C^{\infty}_{0}(\Om)$
 being dense in $\mathbb{X}(\Om)$, and that
$\mathbb{X}(\Om)$ is compactly embedded in $L^{2}(\Om)$ and 
$$H^{s}(\Om):=\{H^{s}(\mathbb{R}^{n}):u\equiv0\,\,\, \text{a.e.\,\,on}\,\,\,\mathbb{R}^{n}\setminus \Om\}.$$

The set 
$\mathbb{X}(\Om)$ is equipped with the structure of a real Hilbert space through its inner product:
$$\langle u,v\rangle_{\mathbb{X}(\Om)}:=\int_{\Om}\langle\nabla u , \nabla v \rangle \dx x,\,\,\,\,u,v\in \mathbb{X}(\Om),$$
and using the inner product,  the norm is equipped
$$\|u\|_{\mathbb{X}(\Om)}:=\left(\int_{\Om} |\nabla u(x)|^{2}\dx x\right)^{\frac{1}{2}}.$$

\begin{definition}
 We say that a function $\mathbb{X}(\Om)\ni u:\mathbb{R}^{n}\rightarrow \mathbb{R}$ is called a weak solution of \eqref{mainproblem}, if 
 \begin{equation*}
     a\int_{\Om}\langle\nabla u , \nabla \varphi \rangle \dx x +b\int_{\Om} \int_{\Om}\frac{
     (u(x)-u(y))(\varphi(x)-\varphi(y))}{|x-y|^{n+2s}}\dx x\dx y=\lambda\int_{\Om}u(x)\varphi(x)\dx x,
 \end{equation*}
 for every $\varphi \in \mathbb{X}(\Om).$
\end{definition}
Based on Definition 2.1, we now present the corresponding definition of the variational eigenvalue.
\begin{definition}
    A number $\lambda(\Om)$ is called a (variational) eigenvalue of \eqref{mainproblem} if there exists a weak solution of \eqref{mainproblem}, that is, if
    \begin{equation*}
    a \int_{\Om}\langle\nabla u , \nabla \varphi \rangle \dx x +b\int_{\Om}\int_{\Om} \frac{
     (u(x)-u(y))(\varphi(x)-\varphi(y))}{|x-y|^{n+2s}}\dx x\dx y=\lambda(\Om)\int_{\Om}u(x)\varphi(x)\dx x,
 \end{equation*}
 for every $\varphi \in \mathbb{X}(\Om)
$. If such a function $u$ exists, it is called an eigenfunction corresponding to the eigenvalue $\lambda(\Om)$.

\end{definition}
\begin{definition}\label{bilform}
    Consider the bilinear form $\mathcal{B}_{\Om,s} :\mathbb{X}(\Om) \times \mathbb{X}(\Om) \rightarrow \mathbb{X}(\Om)$, defined by
    \begin{equation}\label{weakeigen}
        \mathcal{B}_{\Om,s}(u,v):=a\int_{\Om}\langle\nabla u , \nabla v \rangle \dx x +b\int_{\Om} \int_{\Om}\frac{
     (u(x)-u(y))(v(x)-v(y))}{|x-y|^{n+2s}}\dx x\dx y,
    \end{equation}
    for every $u, v \in \mathbb{X}(\Om)$. We say that $u$ and $v$ are $\mathcal{B}$-orthogonal if
    $\mathcal{B}_{\Om,s}(u,v)=0.$
\end{definition}

\begin{theorem}[Proposition 2.2, \cite{DVP12}]
Let $\Om$  be an open set in $\mathbb{R}^{n}$ of class $C^{0,1}$ with a bounded boundary, and $u:\Om\rightarrow \mathbb{R}$ be a measurable set function with $s\in(0,1)$. Then
    \begin{equation}\label{hsh1}
 \int_{\Om}\int_{\Om}\frac{|u(x)-u(y)|^{2}}{|x-y|^{n+2s}}\dx x\dx y\leq  C_{E}\|\nabla u\|^{2}_{L^{2}(\Omega)},   
\end{equation}
where $C_{E}\geq 1$ depends only $n$ and $s$. 
\end{theorem}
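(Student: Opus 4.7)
The plan is to reduce the estimate to two standard ingredients: the $L^2$-continuity of translations controlled by $\|\nabla u\|_{L^2}$, and the Poincar\'e inequality on $H^1_0(\Om)$. Since $\Om$ is of class $C^{0,1}$, any $u\in H^1_0(\Om)$ extends by zero to a function $\tilde u\in H^1(\mathbb{R}^n)$ with $\|\nabla\tilde u\|_{L^2(\mathbb{R}^n)}=\|\nabla u\|_{L^2(\Om)}$. Because the integrand is nonnegative, it suffices to bound the double integral over the whole of $\mathbb{R}^n\times\mathbb{R}^n$. A change of variables $z=x-y$ followed by Fubini gives
\begin{equation*}
\int_{\mathbb{R}^n}\int_{\mathbb{R}^n}\frac{|\tilde u(x)-\tilde u(y)|^2}{|x-y|^{n+2s}}\dx x\dx y=\int_{\mathbb{R}^n}\frac{1}{|z|^{n+2s}}\left(\int_{\mathbb{R}^n}|\tilde u(y+z)-\tilde u(y)|^2\dx y\right)\dx z,
\end{equation*}
and I would split the $z$-integral into the near-diagonal region $\{|z|\leq 1\}$ and the far region $\{|z|>1\}$.

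On $\{|z|\leq 1\}$ the key input is the translation estimate
$$\int_{\mathbb{R}^n}|\tilde u(y+z)-\tilde u(y)|^2\dx y\leq |z|^2\|\nabla\tilde u\|_{L^2(\mathbb{R}^n)}^2,$$
which I would establish first for $\tilde u\in C^\infty_c(\mathbb{R}^n)$ from the identity $\tilde u(y+z)-\tilde u(y)=\int_0^1\nabla\tilde u(y+tz)\cdot z\,dt$ together with Cauchy-Schwarz and Fubini, and then extend to general $\tilde u\in H^1(\mathbb{R}^n)$ by density. Spherical coordinates then give $\int_{|z|\leq 1}|z|^{2-n-2s}\dx z=|\mathbb{S}^{n-1}|/(2-2s)$, which is finite because $s<1$, and this piece therefore contributes a bounded multiple of $\|\nabla u\|_{L^2(\Om)}^2$.

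On $\{|z|>1\}$ the crude bound $|\tilde u(y+z)-\tilde u(y)|^2\leq 2|\tilde u(y+z)|^2+2|\tilde u(y)|^2$ yields an inner integral dominated by $4\|u\|_{L^2(\Om)}^2$, while $\int_{|z|>1}|z|^{-n-2s}\dx z=|\mathbb{S}^{n-1}|/(2s)$ is finite because $s>0$. The Poincar\'e inequality on the bounded domain $\Om$ then converts $\|u\|_{L^2(\Om)}^2$ into a multiple of $\|\nabla u\|_{L^2(\Om)}^2$. Adding the two contributions produces \eqref{hsh1}; should the resulting constant fall below $1$, it may simply be enlarged to meet the normalization $C_E\geq 1$ stated in the theorem.

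The main obstacle is the rigorous justification of the translation estimate on $H^1(\mathbb{R}^n)$: the pointwise fundamental-theorem-of-calculus representation of $\tilde u(y+z)-\tilde u(y)$ is not available for a generic $H^1$ function and has to be obtained by $C^\infty_c$ approximation together with uniform bounds that are continuous in the $H^1$ norm. The only places where the hypothesis $s\in(0,1)$ is used essentially are the convergence of the radial integrals $\int_0^1 r^{1-2s}\,\dx r$ (which needs $s<1$) and $\int_1^\infty r^{-1-2s}\,\dx r$ (which needs $s>0$), so the resulting constant $C_E$ is expected to degenerate as $s\to 0^+$ or $s\to 1^-$.
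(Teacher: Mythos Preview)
The paper does not prove this statement; it is quoted from \cite{DVP12} as a preliminary fact and no proof is supplied. Your argument is essentially the standard one given in that reference: change variables to $z=x-y$, split the kernel into a near-diagonal piece $\{|z|\le 1\}$ controlled by the $H^1$ translation estimate, and a far piece $\{|z|>1\}$ controlled by $\|u\|_{L^2}$. The approximation step you describe for the translation bound is the usual density argument, and the radial integrals are handled correctly.

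One point deserves a comment. You work with $u\in H^1_0(\Om)$, extend by zero, and then invoke Poincar\'e on the bounded domain $\Om$ to absorb $\|u\|_{L^2(\Om)}^2$ into $\|\nabla u\|_{L^2(\Om)}^2$. This makes your constant depend on $\Om$ through the Poincar\'e constant, whereas the displayed statement asserts that $C_E$ depends only on $n$ and $s$. In fact, Proposition~2.2 of \cite{DVP12} as written there carries the full $H^1(\Om)$ norm on the right-hand side (and is stated for general $u\in W^{1,p}(\Om)$, not for $H^1_0$), and only in that form is the constant domain-independent; the paper's restatement with just $\|\nabla u\|_{L^2}$ is a slight abuse. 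Since in this paper the inequality is applied only to eigenfunctions in $\mathbb{X}(\Om)\cong H^1_0(\Om)$ on a fixed bounded $\Om$, your reading is exactly what is needed, and your proof is correct for that purpose---just be aware that it does not deliver the $\Om$-independence of $C_E$ literally claimed, and that the hypothesis ``open set with bounded boundary'' in the quoted statement also allows exterior domains, for which your Poincar\'e step would fail.
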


We refer the reader to the survey papers \cite{Val23}, \cite{DVP12}, and \cite{Gar20} for further details on the theory of fractional Laplacian.

\section{Proof of Theorem \ref{thm1}}
We now state the main results of the paper. To proceed, we first recall the following lemma.

\begin{lemma}\label{lem1}
Assume that $f:\mathbb{R}^{n}\rightarrow \mathbb{R}_{+}$ with $0< f\leq M_{1}$, $\alpha, \beta\geq0$ and $\alpha+\beta>0$ satisfying the following condition:
\begin{equation}\label{conlem1}
    \int_{\Rn}(\alpha|z|^{2}+\beta|z|^{2s})f(z)\dx z\leq M_{2},\,\,\,\,\,s\in(0,1).
\end{equation}
Then we have
\begin{equation}\label{reslem1}
\begin{split}
    \int_{\Rn}f(z)\dx z&\leq \frac{M_{1}\pi^{\frac{n}{2}}}{\Gamma\left(1+\frac{n}{2}\right)}\max\Bigg\{M^{\frac{1}{n+2}}_{2}\left(M_{1}\frac{2\pi^{\frac{n}{2}}}{\Gamma\left(\frac{n}{2}\right)}\left(\frac{\alpha}{n+2}\right)\right)^{-\frac{1}{n+2}},\\&
M^{\frac{1}{n+2s}}_{2}\left(M_{1}\frac{2\pi^{\frac{n}{2}}}{\Gamma\left(\frac{n}{2}\right)}\left(\frac{\beta}{n+2s}\right)\right)^{-\frac{1}{n+2s}}\Bigg\}^{n}.
\end{split}
\end{equation}
\end{lemma}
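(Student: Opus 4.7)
The plan is to invoke the bathtub principle to bound $F := \int_{\Rn} f(z)\,\dx z$. Setting $\omega_n := \pi^{n/2}/\Gamma(1+n/2)$ and $R_0 := (F/(M_1\omega_n))^{1/n}$, the principle says: among all measurable $f$ with $0 \le f \le M_1$ and $\int_{\Rn} f\,\dx z = F$, the function $M_1\,1_{B_{R_0}}$ \emph{minimizes} $\int_{\Rn} g\,f\,\dx z$ for any radially nondecreasing weight $g \colon \Rn \to [0,\infty)$. Applying this with $g(z) := \alpha|z|^2 + \beta|z|^{2s}$ (radially nondecreasing since $\alpha,\beta \ge 0$) and invoking the hypothesis \eqref{conlem1},
\[
M_1 \int_{B_{R_0}} \bigl(\alpha|z|^2 + \beta|z|^{2s}\bigr)\,\dx z \ \le\ \int_{\Rn} g\,f\,\dx z \ \le\ M_2.
\]

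Next I would evaluate the ball integral in polar coordinates via $\int_{B_R} |z|^{2\sigma}\,\dx z = n\omega_n R^{n+2\sigma}/(n+2\sigma)$, reducing the displayed inequality to
\[
\frac{M_1 n\omega_n \alpha}{n+2}\, R_0^{n+2} + \frac{M_1 n\omega_n \beta}{n+2s}\, R_0^{n+2s} \ \le\ M_2.
\]
Since both summands are nonnegative, each is individually $\le M_2$. The $\alpha$-term yields $R_0 \le (M_2(n+2)/(M_1 n\omega_n \alpha))^{1/(n+2)}$, which---after substituting $n\omega_n = 2\pi^{n/2}/\Gamma(n/2)$---coincides with the first argument of the $\max$ in \eqref{reslem1}; an identical manipulation of the $\beta$-term produces the second. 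Hence $R_0$ is dominated by both quantities, and in particular by their maximum. Raising to the $n$-th power and multiplying by $M_1\omega_n = M_1\pi^{n/2}/\Gamma(1+n/2)$ then reproduces the right-hand side of \eqref{reslem1}, since $F = M_1\omega_n R_0^n$.

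The main obstacle is justifying the bathtub step, namely that $M_1\,1_{B_{R_0}}$ minimizes $\int g\,f\,\dx z$ under the stated $L^1$- and $L^\infty$-constraints. This is classical (the bathtub principle of Lieb--Loss) and also admits a direct proof via the layer-cake identity: replacing each super-level set $\{f > t\}$ by a centered ball of equal Lebesgue measure only decreases $\int g\,f\,\dx z$ because $g$ is radially nondecreasing. The degenerate cases $\alpha = 0$ or $\beta = 0$ cause no trouble---the corresponding entry inside the $\max$ becomes $+\infty$ and the bound collapses to the surviving single-term inequality, while the standing assumption $\alpha+\beta > 0$ ensures at least one entry is finite.
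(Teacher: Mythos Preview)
Your proof is correct and follows essentially the same bathtub/rearrangement idea as the paper. The only difference is the framing: the paper chooses the radius $R$ so that $M_1\int_{B_R}(\alpha|z|^2+\beta|z|^{2s})\,\dx z = M_2$ and then proves the pointwise inequality $(\alpha|z|^2+\beta|z|^{2s}-\alpha R^2-\beta R^{2s})(f-M_1\chi_{B_R})\ge 0$ by hand, whereas you fix $R_0$ via the mass constraint $M_1\omega_n R_0^n = F$ and invoke the bathtub principle by name; the resulting inequality on $R$ (resp.\ $R_0$) and the final computation are identical.
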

\begin{proof}
    Let us denote $g(z)=M_{1}\chi_{\{|z|\leq R\}}$ where $\chi$ is the characteristic function and $R$ will be chosen as 
\begin{equation}\label{inlemfor1}
 \int_{\Rn}[\alpha|z|^{2}+\beta|z|^{2s}]g(z)\dx z=M_{2}.   
\end{equation}
Hence (checking separately, $|z|\leq R$ and $|z|> R$), we have 
$$(\alpha|z|^{2}+\beta|z|^{2s}-\alpha R^{2}-\beta R^{2s})(f(z)-g(z))\geq0.$$
Using the last fact with \eqref{conlem1}, we establish
\begin{equation}\label{inlemfor2}
    (\alpha R^{2}+\beta R^{2s})\int_{\Rn}(f(z)-g(z))\dx z \leq \int_{\Rn}(\alpha|z|^{2}+\beta|z|^{2s})(f(z)-g(z))\dx z \leq 0.
\end{equation}
From \eqref{inlemfor1} for $R>0$, we compute
$$M_{2}= \int_{\Rn}(\alpha|z|^{2}+\beta|z|^{2s})g(z)\dx z=M_{1}|\Sn|\left[\frac{\alpha R^{n+2}}{n+2}+\frac{\beta R^{n+2s}}{n+2s}\right],$$
where $|\Sn|=\frac{2\pi^{\frac{n}{2}}}{\Gamma\left(\frac{n}{2}\right)}$ is the volume of the unit $n-1$ sphere.
Then, we have
\begin{equation*}
R\leq \min\Bigg\{M^{\frac{1}{n+2}}_{2}\left(M_{1}|\Sn|\left(\frac{\alpha}{n+2}\right)\right)^{-\frac{1}{n+2}},
M^{\frac{1}{n+2s}}_{2}\left(M_{1}|\Sn|\left(\frac{\beta}{n+2s}\right)\right)^{-\frac{1}{n+2s}}\Bigg\}.        
\end{equation*}
Using the polar decomposition, we have one
$\int_{\Rn}g(z)\dx z= \frac{M_{1}}{n}|\Sn|R^{n}, \,\,\,\text{for}\,\,\,R>0.$ Combining \eqref{inlemfor2} with the last facts, we conclude
\begin{equation*}
\begin{split}
    \int_{\Rn}f(z)\dx z&\leq \int_{\Rn}g(z)\dx z
    = M_{1}|\Sn|\frac{R^{n}}{n}\\&
    \leq  \frac{M_{1}|\Sn|}{n}\max\Bigg\{M^{\frac{1}{n+2}}_{2}\left(M_{1}\frac{2\pi^{\frac{n}{2}}}{\Gamma\left(\frac{n}{2}\right)}\left(\frac{\alpha}{n+2}\right)\right)^{-\frac{1}{n+2}},\\&
M^{\frac{1}{n+2s}}_{2}\left(M_{1}\frac{2\pi^{\frac{n}{2}}}{\Gamma\left(\frac{n}{2}\right)}\left(\frac{\beta}{n+2s}\right)\right)^{-\frac{1}{n+2s}}\Bigg\}^{n},
\end{split}
\end{equation*} which completes the proof.\end{proof}

\begin{proof}[Proof of Theorem \ref{thm1}]
    Let us fix an orthonormal system of eigenfunctions $u_{j}(x),\,j=1,2,3,\ldots,$ in $L^{2}(\Omega)$ (see \cite[Proposition 1]{MMV23}). Consider the following function
    \begin{equation}\label{thmfor1}
     f(\xi):=   \sum_{i=1}^{k}|\hat{u}_{i}(\xi)|^{2},
    \end{equation}
    where $\hat{u}$ is the Fourier transform of $u$. Using the Bessel inequality, we get
    \begin{equation}\label{thmfor2}
        \begin{split}
            0< f(\xi)&=\sum_{i=1}^{k}|\hat{u}_{i}(\xi)|^{2}\\&
            =\sum_{i=1}^{k}\left|(2\pi)^{-\frac{n}{2}}\int_{\Rn}\text{e}^{\text{i}x\xi}u_{i}(x)\dx x\right|^{2}\\&
            \leq (2\pi)^{-n} \int_{\Omega}|\text{e}^{\text{i}x\xi}|^{2}\dx x\\&
            =(2\pi)^{-n}|\Omega|.
        \end{split}
    \end{equation}
To begin the proof of $(i)$, we invoke the Plancherel identity along with the orthonormality of the system $\{u_{k}\}_{k=1}^{+\infty}$ in $L^{2}(\Om)$. Accordingly, we obtain:
    \begin{equation}\label{thmfor3}
        \begin{split}
      \int_{\Rn}(a|\xi|^{2}+b|\xi|^{2s})f(\xi)\dx \xi&= \sum_{i=1}^{k} \int_{\Rn}(a|\xi|^{2}+b|\xi|^{2s})|\hat{u}_{i}(\xi)|^{2}\dx \xi \\&
      =\sum_{i=1}^{k}\left(a\|\nabla u_{i}\|^{2}_{L^{2}(\Om)}+b\int_{\Omega}\int_{\Omega}\frac{|u_{i}(x)-u_{i}(y)|^{2}}{|x-y|^{n+2s}}\dx x\dx y\right)\\&
    =\sum_{i=1}^{k} \lambda_{i}(\Om)\int_{\Om}|u_{i}(x)|^{2}\dx x\\&
      =\sum_{i=1}^{k} \lambda_{i}(\Om).
        \end{split}
    \end{equation}
Combining  \eqref{thmfor2} and \eqref{thmfor3}, and fixing $\alpha=a$, $\beta=b,$ with $$M_{1}=(2\pi)^{-n}|\Omega|,\,\,\, \text{and}  \,\,\,M_{2}=
\sum\limits_{i=1}^{k} \lambda_{i}(\Om),$$ in Lemma \ref{lem1}, we get
    \begin{equation*}
        \begin{split}
            k=\int_{\Rn}f(\xi)\dx \xi &\leq  \max\Bigg{\{}\left(\frac{n+2}{an}\sum\limits_{i=1}^{k} \lambda_{i}\right)^{\frac{2}{n+2}}\left(\frac{(2\pi)^{-n}|\Omega|\pi^{\frac{n}{2}}}{\Gamma\left(1+\frac{n}{2}\right)}\right)^{\frac{2}{n+2}},\\& \left(\frac{n+2s}{bn}\sum\limits_{i=1}^{k} \lambda_{i}\right)^{\frac{2s}{n+2s}}\left(\frac{(2\pi)^{-n}|\Omega|\pi^{\frac{n}{2}}}{\Gamma\left(1+\frac{n}{2}\right)}\right)^{\frac{2s}{n+2s}}\Bigg{\}}.
        \end{split}
    \end{equation*}
Hence, we have \eqref{LYa}.

Now, let us prove $(ii)$.  Using the same technique with \eqref{hsh1}, $a>0$ and $0>b>-\frac{a}{C_{E}}$, we have:
    \begin{equation}\label{thmfor23}
        \begin{split}
      \left(1+\frac{bC_{E}}{a}\right)\int_{\Rn}|\xi|^{2}f(\xi)\dx \xi&= \left(1+\frac{bC_{E}}{a}\right)\sum_{i=1}^{k} \int_{\Rn}|\xi|^{2}|\hat{u}_{i}(\xi)|^{2}\dx \xi \\&
      =\sum_{i=1}^{k} \left(1+\frac{bC_{E}}{a}\right)\|\nabla u_{i}\|^{2}_{L^{2}(\Om)}\\&
      \leq a^{-1}\sum_{i=1}^{k}\left(a\|\nabla u_{i}\|^{2}_{L^{2}(\Om)}+b\int_{\Omega}\int_{\Omega}\frac{|u_{i}(x)-u_{i}(y)|^{2}}{|x-y|^{n+2s}}\dx x\dx y\right)\\&
      =a^{-1}\sum_{i=1}^{k} \lambda_{i}(\Om),
        \end{split}
    \end{equation}
  and  similarly,
    \begin{equation*}\label{thmfor24}
      \left(\frac{1}{C_{E}}+\frac{b}{a}\right)\int_{\Rn}|\xi|^{2s}f(\xi)\dx \xi\leq a^{-1}\sum_{i=1}^{k} \lambda_{i}(\Om).
    \end{equation*}
    By combining equations \eqref{thmfor2} and \eqref{thmfor23} with the parameter choices $$M_{1}=(2\pi)^{-n}|\Omega|,\,\,  M_{2}=\frac{1}{a}\sum\limits_{i=1}^{k} \lambda_{i}(\Om),\,\, \beta=0, \,\,\,\text{and} \,\,\,\alpha=1+\frac{bC_{E}}{a},$$ and alternatively fixing $$M_{1}=(2\pi)^{-n}|\Omega|,\,\, M_{2}=\frac{1}{a}\sum\limits_{i=1}^{k} \lambda_{i}(\Om),\,\, \alpha=0,\,\,\, \text{and}\,\,\,\beta=\frac{1}{C_{E}}+\frac{b}{a},$$ in Lemma \ref{lem1}, we obtain inequality \eqref{LYb}, thereby completing the proof.
\end{proof}

\section*{Declaration of competing interest}
	The Authors declare that there is no conflict of interest.

\section*{Data Availability Statements} The manuscript has no associated data.

\end{document}